\newcommand{\no}[1]{#1}
\renewcommand{\no}[1]{}
\renewcommand{\Delta}{\upDelta}
\date{\today}
\numberwithin{equation}{section}% 
\newtheorem{theorem}{Theorem}[section]
\newtheorem{proposition}{Proposition}[section]
\newtheorem{lemma}{Lemma}[section]
\newtheorem{definition}{Definition}[section]
\theoremstyle{definition}
\newtheorem{remark}{Remark}[section]
\DeclareMathOperator{\supp}{supp}
\DeclareMathOperator{\WF}{WF}
\DeclareMathOperator{\WFA}{WF_A}
\newcommand{\eps}{\varepsilon}
\newcommand{\R}{{\bf R}}
\renewcommand{\r}[1]{(\ref{#1})}
\newcommand{\PDO}{$\Psi$DO}
\newcommand{\be}[1]{\begin{equation}\label{#1}}
\newcommand{\ee}{\end{equation}}
\renewcommand{\d}{\mathrm{d}}
\renewcommand{\i}{\mathrm{i}}
\title[Support theorems for the Light Ray transform]{Support theorems for the Light Ray transform\\ on analytic Lorentzian manifolds}
\author[P. Stefanov]{Plamen Stefanov}
\address{Department of Mathematics, Purdue University, West Lafayette, IN 47907}
\thanks{Partly supported by a NSF  Grant DMS-1301646}
\begin{document}
\begin{abstract}
We study the weighted  ray transform  of integrating functions on  a Lorentzian manifold over lightlike geodesics. We prove support theorems  if the manifold and the weight are analytic. 
\end{abstract} 
\maketitle

\section{Introduction} 
Let $g$ be a Lorentzian metric with signature $(-,+,\dots,+)$ on the manifold $M$ of dimension $1+n$, $n\ge2$. Light-like geodesics $\gamma(s)$ (also called null geodesics) are the solutions of the geodesic equation $\nabla_s \dot\gamma=0$ for which $g(\dot\gamma,\dot\gamma)=0$. There is no canonical unit speed  parameterization as in the Riemannian case as discussed below. For some fixed choice of it, we define the weighted light ray transform $L_\kappa f$ of a function (or a distribution) $f$ on $M$  by
\be{1.1}
L_\kappa f(\gamma) = \int  \kappa(\gamma(s), \dot\gamma(s)) f(\gamma(s))\,\d s,
\ee
where $\gamma$ runs over all null geodesics. Here $\kappa$ is a weight function, positively homogeneous in its second variable of degree zero, which makes it parameterization independent.   
When $\kappa=1$, we use the notation $L$. 
Conditions for $\supp f$ and the interval of definition of the geodesics will be specified below but in all cases, the integration is carried over a compact interval. This transform appears in the study of hyperbolic equations when we want to recover a potential term from boundary or scattering information, see, e.g., \cite{MR1004174,Ramm-Sj, Ramm_Rakesh_91,waters2014stable, watersR_2013, Salazar_13, Aicha_15} for time dependent coefficients,  and also     \cite{BellassouedDSF, Carlos_12} for time-independent ones.  It belongs to the class of the restricted X-ray transforms  since the complex of curves is restricted to the lower dimension manifold $g(\dot\gamma, \dot\gamma)=0$.  

Our goal is to study the invertibility of $L_\kappa $, including its microlocal invertibility. While the methods we develop could be used to study stable recovery of the ($C^\infty$) spacelike wave front set of $f$, we concentrate our attention here on support theorems for analytic metrics and weights. 
  In \cite{MR1004174},   the author showed that if $g$ is the Minkowski metric, and if $f(t,x)$ is supported in a cylinder $\R\times B(0,R)$ and has tempered growth in the time variable, then $L f$ determines $f$ uniquely, see also \cite{Ramm-Sj}. The proof was based on the fact that $L f$ recovers the Fourier transform $\hat f$ of $f$ (w.r.t.\ all variables) in the spacelike cone $|\tau|<|\xi|$ in a direct (and stable) way and since $\hat f(\tau,\xi)$ is analytic in the $\xi$ variable (with values distributions in the $\tau$ variable), then one can fill in the missing cone by analytic continuation in the $\xi$ variable. It is easy to see that there is no stable way to recover $\hat f$ in the timelike cone $|\tau|>|\xi|  $ (true also  in the most general Lorentzian case, see next paragraph) thus $L$ has a high degree of instability, see also \cite{Begmatov01}. From a physical point of view, this could be expected: we can recover all ``signals'' moving slower than light, and we should not expect to recover those moving faster than light; and the latter should not exist anyway expect for possible group velocity faster than light. 

When the metric is not flat, it is fairly obvious that $L_\kappa f$ cannot ``see'' the wave front $\WF(f)$ in the timelike cone, this just follows from the inspection of the wave front of the Schwartz kernel of $L_\kappa $, see also  Theorem~\ref{thm_M} for the Minkowski case. Recovery of $\WF(f)$ in the spacelike cone is far less obvious and certainly requires some geometric assumptions like no conjugate points or existence of a foliation of strictly convex surfaces, as we explain below.  One possible approach 
is to analyze  the normal operator $L_\kappa 'L_\kappa $ as in   \cite{Greenleaf-Uhlmann, Greenleaf_Uhlmann90, Greenleaf_UhlmannCM}. That operator is in the $I^{p,l}$ class of  \PDO s with singular kernels, which are Fourier Integral Operators (FIOs), in fact, see \cite{Greenleaf-U_90} and the references there. The analysis of $L_\kappa 'L_\kappa $  in the Minkowski case for $n=2$ is presented in \cite{Greenleaf-Uhlmann, Greenleaf_Uhlmann90, Greenleaf_UhlmannCM} as an example illustrating a much more general theory. Applying the $I^{p,l}$ calculus to get more refined microlocal results however requires the cone condition which cannot be expected to hold on general Riemannian manifolds due to the lack of symmetry, as pointed out in \cite{Greenleaf_UhlmannCM}. 
An alternative approach to recover the $C^\infty$ spacelike singularities can be found in \cite{LOSU-strings}.

Our main result is support theorems and injectivity of $L_\kappa $ for analytic metrics and weights (on analytic manifolds $M$). It can be viewed as an extension of the classical Helgason support theorem for Radon transforms in the Euclidean space \cite{Helgason-Radon}.  
 We use analytic microlocal arguments. Such techniques go back to \cite{BQ1,BQ,Boman-Helgason}. In \cite{BQ1}, the authors prove support theorems for Radon transforms (with flat geometry) and  analytic weights. In \cite{BQ}, they study ``admissible line complexes'' in $R^{1+2}$ with analytic weights, and type III there includes a weighted version of $L_\kappa $ in the Minkowski case. 
 Their arguments however are based on the calculus of the analytic FIOs as an analytic version of the $C^\infty$ analysis in \cite{Greenleaf-Uhlmann}. Such a generalization does not exist to the best of the author's knowledge. Even the analytic \PDO\ calculus is quite delicate already, see, e.g., \cite{Treves}, and an analytic version of the FIO calculus, including the $I^{p,l}$ one, would pose even more challenges. Support theorems for the geodesic transforms on simple analytic manifolds have been proved with analytic microlocal  techniques in \cite{Venky09,SV} and related results; even for tensor fields in \cite{FSU,  SU-JAMS, SU-AJM}. A breakthrough was made by Uhlmann and Vasy in \cite{UV:local}; who proved a support theorem in the Riemannian case near a strictly convex point of a surface in dimensions $n\ge3$ without the analyticity condition. The X-ray transform is assumed to be zero on all geodesics close to tangent ones to the surface at that point, and $f$ is a priori supported on the concave side.  Their arguments are based on application of the scattering calculus \cite{Melrose94} and the $n\ge3$ assumption is needed to guarantee ellipticity in a neighborhood of the point.

The approach we propose is simpler and avoids all the difficulties related to the singularities of the symbol of $L_\kappa 'L_\kappa $: we form smooth timelike surfaces foliated by  lightlike geodesics  over which one can compute a weighted Radon transform $R$ by just applying Fubini's theorem. This reduces the problem to a microlocal inversion of that (non-restricted) Radon transform known on an open set of surfaces, which in the smooth case is doable with classical microlocal techniques going back to Guillemin \cite{Guillemin85,GuilleminS}.  Analytic singularities can be resolved by the local Radon transform, as well \cite{ZH_surfaces}.  
On the other hand, this approach does not allow us to analyze the lightlike singularities, where some form of the $I^{p,l}$ calculus would still be needed. 
In the proof of Theorem~\ref{thm_LC} those surfaces do not appear explicitly but they can be thought of as  the level surfaces of the phase function $\phi$.   
One would expect to  be able to do the analytic microlocal inversion by treating $R'R$ as an analytic \PDO\ but it is not clear how to do that to obtain purely local results due to the delicate nature of cut-offs allowed in that calculus. Instead, we use the analytic stationary phase approach by Sj\"ostrand \cite{Sj-A} already used by the author and Uhlmann in \cite{SU-AJM},  see also \cite{FSU,Venky09,SV}.

As a simple example illustrating the reduction of the restricted ray transform $L_\kappa $ to a classical Radon transform $R$, consider the Minkowski case. Light geodesics are given then by the lines parallel to $(1,\theta)$, with $|\theta|=1$.  Every timelike plane (with a normal $\nu=(\nu_t,\nu_x)$ such that $|\nu_t|<|\nu_x|$) can be represented easily as a foliation of light rays. If $L_\kappa f\in C^\infty$ (or analytic), then so is $Rf$ on the open manifold of those planes. Then we have to invert microlocally the classical Radon transform, which is well known. 
This argument still works if we introduce a weight in $L_\kappa $ and/or know $L_\kappa f$ localized to an open set of light rays only, see Theorem~\ref{thm_C}.

Finally, we notice that some global conditions on the geodesic flow are clearly needed for 
microlocal inversion, even in the spacelike cone. If $g = -\d t^2+ h_{ij}(x)\d x^i \d x^j$, where $h$ is a Riemannian metric on a bounded domain, then $L_\kappa $, restricted to $t$-independent function reduces to the geodesic X-ray transform $X$. It has been shown recently \cite{SU-caustics, MonardSU14} that when $n=2$, $Xf$ recovers $\WF(f)$ in a stable way if and only if there are no conjugate points. When $n\ge3$, the no conjugate points condition is sufficient \cite{SU-JAMS, SU-AJM} and there are examples of metrics of product type for which it is necessary, by the 2D results in \cite{SU-caustics, MonardSU14}. On the other hand, the support theorem in \cite{UV:local} provides global uniqueness and stability under another type of condition:   existence of a foliation by strictly convex surfaces (conjugate points may exist). This implies stable invertibility when $n\ge3$  without analyticity assumptions. We assume the foliation condition but in contrast to \cite{UV:local}, here $n=2$ is allowed since for our purposes, full ellipticity (in all directions at a point) is not needed; only ellipticity at directions conormal to the foliation suffices. Also, full ellipticity does not hold in the Lorentzian case since the timelike singularities are invisible. 
On the other hand, we require $g$ and $\kappa$ to be analytic. One would expect support theorems under the no-conjugate points assumption as well but that remains an open question.

\textbf{Acknowledgments.} The author thanks Manuel Guti\'errez for helpful discussions about  Lorentzian geometry and to Gunther Uhlmann for numerous discussions about  Integral Geometry and Inverse Problems.

\section{Main results} 
\subsection{Support theorems for the Minkowski spacetime} 
Let $g = -\d t^2+ (\d x^1)^2+\dots +(\d x^n)^2$ be the Minkowski metric in $\R^{1+n}$. 
Future pointing lightlike geodesics are given by $s\mapsto (t+s,x+s\theta)$ with $|\theta|=1$. They can be reparameterized by shifting and rescaling $s$.  Note that the notion of ``unit'' speed is not invariantly defined under Lorentzian transformations but in a fixed coordinate system, the scaling parameter $1$ (i.e., $\d t/d s=1$) is a convenient choice. 
Set
\be{Ldef}
Lf(x,\theta) = \int f(s, x+s\theta)\,\d s,\quad x\in \R^{n}, \,\theta\in S^{n-1}.
\ee
This definition is based on parameterization of the lightlike geodesics (lines) by their point of intersection with $t=0$ and direction $(1,\theta)$. We will use the notation
\be{g}
\ell_{x,\theta}(s) = (s,x+s\theta). 
\ee
The parameterization $(x,\theta)$ defines a natural topology and a manifold structure of the set of the future pointing lightlike geodesics. 
 
Given  a weight $\kappa\in C^\infty(\R\times\R^n\times S^{n-1})$, we can define the weighted version $L_\kappa$ of $L$ by
\[
L_\kappa f(x,\theta) = \int   \kappa(s, x+s\theta,\theta)  f(s, x+s\theta)\,\d s,\quad x\in \R^{n}, \,\theta\in S^{n-1}.
\]

In the terminology of relativity theory, vectors $v=(v^0,v')$ satisfying $|v_0|<|v'|$ (i.e., $g(v,v)>0$) are called \textit{spacelike}. The simplest example are vectors $(0,v')$, $v'\not=0$. %They correspond to speeds spacetime  which does not move in time.   
Vectors with $|v_0|>|v'|$ (i.e., $g(v,v)<0$) are \textit{timelike}; an example is $(1,0)$ which points along the time axis. 
\textit{Lightlike} vectors are those for which we have equality: $g(v,v)=0$. For covectors, 
the definition is the same but we replace $g$ by $g^{-1}$, which is consistent with the operation of raising and lowering the indices.  
Surfaces with timelike normals (which are covectors) are spacelike, and vice versa.

%We introduce the following three microlocal regions of $T^*\R^n\setminus 0$:
%\begin{description}
%\item[Spacelike cone] $\Sigma_{s}=\{(t,x;\tau,\xi);\; |\tau|<|\xi| \}$;
%\item[Lightlike cone] $\Sigma_{l}=\{(t,x;\tau,\xi);\; |\tau|=|\xi| \}$;
%\item[Timelike cone] $\Sigma_{t}=\{(t,x;\tau,\xi);\; |\tau|>|\xi| \}$.
%\end{description}
%In the Minkowski case, we can think of them as products of $\R^{1+n}$ and the corresponding cones in the dual space $\R^{1+n}$. 

\begin{definition}\label{def_slow}
Let $K$ be a subset of the Minkowski spacetime. We say that $K$ expands with speed less than one if
\be{supp}
K\subset \left\{ (t,x);\; |x|\le c|t|+R  \right\}\quad \text{for some $0<c<1$, $R>0$}.
\ee
\end{definition}

Condition \r{supp} is easily seen to be invariant under Lorentz transformations. Also, it does not require $\supp f$ to be compact.

The terminology we used is a bit ambiguous. What we actually mean  is that the cross-section of $K$ with any plane $t=\text{const.}$ is a bounded set contained in a ball expanding with a speed less than one. If $\partial K$ is smooth, we do not really require it to be timelike.  

In the Minkowski spacetime, we have the following support theorem.

\begin{theorem}\label{thm_linesM}
Let $f\in\mathcal{D}'(\R^{1+n})$ be so that $\supp f$ expands with a speed less than one. 
Let $\ell_{x_0,\theta_0}$ be a fixed lightlike line in the Minkowski spacetime 
and let  $U\ni(x_0,\theta_0)$ be an open and connected subset of $\R^n\times S^{n-1}$. Let $ \kappa(t,x,\theta)$ be analytic and non-vanishing for  $(t,x)$ near $\supp f$ so that   $(x-t\theta,\theta)\in U$.   

If $L_\kappa f(x,\theta)=0$   in $U$ and if $\ell_{x_0,\theta_0}$ does not intersect $\supp f$, then none of the lines $\ell_{x,\theta}$, $(x,\theta)\in U$, does. 
\end{theorem}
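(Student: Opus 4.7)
The strategy is three-fold: (i) reduce the weighted lightlike ray transform $L_\kappa$ to a classical weighted hyperplane Radon transform $R_{\tilde\kappa}$ by Fubini along timelike hyperplanes foliated by lightlike lines; (ii) invert $R_{\tilde\kappa}$ analytic-microlocally via Sj\"ostrand's stationary phase, obtaining absence of $\WFA(f)$ in spacelike directions conormal to the rays; and (iii) propagate the non-intersection property in $U$ using Kashiwara's watermelon theorem, connectedness of $U$, and the expansion condition~\r{supp}.

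\textbf{Step 1 (Fubini reduction).} Fix $(x,\theta)\in U$. Any affine hyperplane $H\subset\R^{1+n}$ with conormal $\xi=(\xi_0,\xi')$ satisfying $\xi_0+\xi'\cdot\theta=0$ and $|\xi_0|<|\xi'|$ is timelike and is ruled by parallel lightlike lines $\ell_{y,\theta}$ with $y$ in an $(n-1)$-plane transverse to $\theta$. Fubini gives
\[
R_{\tilde\kappa}f(H):=\int_H \tilde\kappa(q,\xi)\,f(q)\,\d\sigma_H(q)=\int L_\kappa f(y,\theta)\,\d y',
\]
for $\tilde\kappa$ analytic and non-vanishing (a product of $\kappa$ and a Jacobian). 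Hence $L_\kappa f|_U=0$ forces $R_{\tilde\kappa}f(H)=0$ on an open family of timelike hyperplanes near every line in $U$.

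\textbf{Step 2 (Analytic microlocal inversion).} I apply Sj\"ostrand's analytic stationary phase method (as in \cite{SU-AJM,Venky09,SV}) to a microlocally cut-off composition $R_{\tilde\kappa}^t R_{\tilde\kappa}$. Exploiting ellipticity of the Radon normal operator on conormals to its hyperplanes together with the analyticity of $\tilde\kappa$, one concludes: for every $p$ on a line $\ell_{x,\theta}$ with $(x,\theta)\in U$, and every spacelike covector $\xi\in T_p^*\R^{1+n}\setminus 0$ annihilating $(1,\theta)$, one has $(p,\xi)\notin\WFA(f)$.

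\textbf{Step 3 (Propagation).} Set $V:=\{(x,\theta)\in U:\ell_{x,\theta}\cap\supp f=\emptyset\}$. By \r{supp} with $c<1$, any line $\ell_{x,\theta}$ leaves $\supp f$ for $|s|$ large, so $\ell_{x,\theta}\cap\supp f$ is compact whenever nonempty, and small perturbations of $(x,\theta)$ preserve the non-intersection; hence $V$ is open in $U$, and by hypothesis non-empty. Connectedness of $U$ reduces the theorem to showing $V$ is also closed in $U$. Suppose not: take $(x_*,\theta_*)\in\partial V\cap U$. Then $\ell_*:=\ell_{x_*,\theta_*}$ meets $\supp f$ compactly, and approximation by rays in $V$ produces $p_*\in\ell_*\cap\partial(\supp f)$. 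A first-contact analysis along a continuous path in $U$ joining $(x_0,\theta_0)$ to $(x_*,\theta_*)$ shows $\ell_*$ is tangent to $\supp f$ at $p_*$; combined with the $c<1$ expansion condition one obtains a $C^\omega$ defining function $\phi$ near $p_*$ with $\phi(p_*)=0$, $\supp f\cap\n(p_*)\subset\{\phi\le 0\}$, and $\xi_*:=\d\phi(p_*)$ spacelike and annihilating $(1,\theta_*)$ (tangency yields the annihilation; expansion speed $<1$ forces the supporting tangent hyperplane to be timelike, hence $\xi_*$ spacelike). Kashiwara's watermelon theorem then gives $(p_*,\xi_*)\in\WFA(f)$, contradicting Step 2 at $\ell_*$. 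Therefore $V=U$.

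\textbf{Main obstacle.} The analytical heart is Step 2: implementing Sj\"ostrand's analytic stationary phase with analytic weights to obtain a purely \emph{local} result, since a full analytic FIO (let alone $I^{p,l}$) calculus is unavailable, as emphasized in the introduction. Geometrically, the delicate point of Step 3 is producing $\xi_*$ that is \emph{simultaneously} spacelike and conormal to $(1,\theta_*)$; this is exactly where the hypothesis $c<1$ is indispensable, because a timelike $\xi_*$ would fall in the invisible cone of $L_\kappa$ and the watermelon/Step 2 contradiction would fail.
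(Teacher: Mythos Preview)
Your Steps 1 and 2 are essentially what the paper does in Lemma~3.1 (the Fubini reduction to a hyperplane Radon transform is implicit in the choice of phase, and the Sj\"ostrand stationary phase argument is carried out directly). The substantive divergence, and the genuine gap, is in Step~3.

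You claim that at the first-contact point $p_*$ one can produce a $C^\omega$ defining function $\phi$ with $\supp f\subset\{\phi\le 0\}$ locally and $\d\phi(p_*)$ \emph{spacelike} and annihilating $(1,\theta_*)$. Neither property is justified. The set $\supp f$ has no regularity whatsoever, so there is no reason a supporting hyperplane (let alone one containing $\ell_*$) exists at $p_*$; ``tangency of $\ell_*$ to $\supp f$'' is not well defined. Even if one could find a supporting hyperplane containing $\ell_*$, its conormal satisfies $|\xi_0|=|\xi'\cdot\theta_*|\le|\xi'|$, so it could be lightlike rather than spacelike, and your microlocal conclusion from Step~2 does not cover lightlike conormals. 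The expansion bound $|x|\le c|t|+R$ is a global growth condition and says nothing about the local geometry of $\partial(\supp f)$; it does not force any supporting hyperplane at $p_*$ to be timelike.

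The paper avoids this by never touching $\partial(\supp f)$ directly. Instead it chooses $\tilde c\in(c,1)$ and sweeps a \emph{solid cylinder} $\tilde C_{x_p,\theta_p}$ (union of timelike lines of speed $\tilde c$, a tube of radius $\eps/2$) along a path $p\mapsto(x_p,\theta_p)$ in $U$. The boundary of this cylinder is an explicit analytic timelike hypersurface with a manifestly spacelike conormal $\zeta^\sharp=(-\tilde c\,\theta_p\cdot\omega,\omega)$; at the first $p_0$ where the cylinder meets $\supp f$, the SKK theorem applies cleanly. The second nontrivial point --- which your sketch also skips --- is that one must then produce a \emph{lightlike} line through $z^\sharp$ conormal to $\zeta^\sharp$ that still belongs to the family parameterized by $U$ (so that Step~2 applies to it). The paper arranges this quantitatively (property~(4.3) in the proof): for $1-\tilde c$ small enough, any lightlike direction $(1,\theta^\sharp)$ orthogonal to $\zeta^\sharp$ satisfies $|\theta^\sharp-\theta_p|\le\sqrt{1-\tilde c^2}$ and the corresponding line stays inside the larger lightlike cylinder $C_{x_p,\theta_p}\subset\mathcal{U}$. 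This is where the hypothesis $c<1$ is actually used --- to allow room for an intermediate speed $\tilde c$ --- not, as you suggest, to control the geometry of $\partial(\supp f)$.
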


\subsection{Support theorems on  Lorentzian manifolds} 
Let $(M,g)$ be a Lorentzian manifold.  Light-like (null) geodesics are defined as the geodesics $\gamma(s)$ for which $g(\dot\gamma,\dot\gamma)=0$. They exist at least locally by the ODE theory.  
There is no canonical parameterization since for any linear transformation of the $s$ variable $\sigma(s) = as+b$, $a\not=0$, $\gamma\circ\sigma$ is still a null geodesic. Moreover, $a$ and $b$ may change from geodesic to geodesic. Let $S$ be a spacelike surface near a fixed lightlike geodesic  $\gamma_0(s)$, intersecting $S$ for $s=0$. Then we can parameterize the lightlike geodesics in some neighborhood of $\gamma_0(0)\in S$ close to $\gamma_0$ with directions close to $\dot\gamma(0)$ with initial points $x$ on $S$ and initial lightlike directions $v$ at $x$ pointing in the direction of $\dot\gamma_0$. A choice of the scaling of the parameter $s$ along each $\gamma(s)$ can be fixed by requiring $\gamma(0)\in S$ and requiring the normal component of $\dot\gamma$ on $S$ to be a given negative function, for example $-1$. If that function is smooth/analytic when $S$ is smooth/analytic, we call the parameterization smooth/analytic. This property does not depend on the choice of $S$ and also defines a topology and a smooth/analytic structure  of the lightlike geodesics defined on a fixed interval.   We could use a timelike surface as initial points instead. 

If $\mathcal{C}\subset M$ is closed, we call the null geodesic $\gamma(s)$ \textit{non-trapping in $\mathcal{C}$}, if $\gamma^{-1}(\mathcal{C})$ is contained in some open finite interval call it $I$ for a moment. For any local parameterization of null geodesic as above, the maximally extended null geodesic with initial points and directions close enough to $\gamma$ would leave $\mathcal{C}$ for $s$ near the ends of $I$. Some of them may return to $\mathcal{C}$ for $s\not\in I$  (even though this cannot happen to $\gamma$ but we restrict them to $I$ only. Then  we consider those geodesics a neighborhood of $\gamma$, identified with the neighborhood of the initial points and directions in that parameterization. That definition of local neighborhood is independent of the chosen parameterization and defines a topology near $\gamma$ (restricted to $I$). 
For any such choice of the parameterization, we then define $L_\kappa f$ locally by \r{1.1} for any $f\in C_0^\infty$, with $s$ restricted to $I$.
 A different analytic parameterization would change $L_\kappa f$ (in a trivial way) but it will not change its property to be smooth or analytic, or zero. 
 
  We do not want to assume that $f$ is compactly supported  but we always assume that 
 we integrate over a set of light geodesics non-trapping in  $\supp f$.  Then locally, we may cut, a smooth  $f$ in a smooth way to make it compactly supported without changing $L_\kappa f$ near that geodesic. This reduces the local analysis to compactly supported functions. An example is a function supported in a cylinder $|x|\le R$ in the Minkowski case; or more general $f$ with $\supp f$  expanding with speed less than one, see \r{supp}. 
% $\gamma^{-1}(\supp f)$ is contained in some bounded interval which remains unchanged under small perturbations of $\gamma$. Small perturbations of $\gamma$ are defined in a fixed parameterization of the latter as in the previous paragraph which fixes a topology on that set independent of the parameterization, by that assumption. 
This allows $L_\kappa f$ to be well defined for smooth $f$ over open sets of non-trapped light geodesics and then by duality for distributions $f$. Indeed, for every  distribution $f$ in $M$ we can set locally  $L_\kappa f= L_\kappa\chi f$, with a suitable $\chi\in C_0^\infty$, and the latter makes sense by duality. In other words, near a fixed null geodesic, non-trapping in $\supp f$, it is enough to study $L_\kappa$ restricted to compactly supported distributions $f$. Based on that, to simplify the formulation of the next theorem, we assume that $\supp f$ is compact.

\begin{theorem}\label{thm_L} 
Let $(M,g)$ be an analytic Lorentizan manifold and let $\kappa$ be an analytic non-vanishing weight. 
Let $F: M \to [0,1]$ be a smooth function. Assume that $f\in \mathcal{E}'(M)$ and 

%(i) There is a compact set $K\subset M$ so that $F^{-1}[0,\sigma]\cap \supp f= K\cap \supp f$ for all $\sigma\in[0,1]$. 

 (i)  $F^{-1}(0)\cap\supp f=\emptyset$, 
 
 (ii) $\d F\not=0$ on $\supp f$,
 
 (iii) $F^{-1}(\sigma)\cap \supp f$ is strictly lightlike-convex for all $\sigma\in[0,1]$. 
 
\noindent   
Then if $L_\kappa f(\gamma)=0$ in a neighborhood of  all null-geodesics with the property that each one is tangent to some of the surfaces $F^{-1}(\sigma)$, $\sigma\in [0,1]$,  then $f=0$ on $F^{-1}[0,1)$.
\end{theorem}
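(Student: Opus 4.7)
The plan is a layer-stripping argument along the foliation $\{F^{-1}(\sigma)\}_{\sigma\in[0,1]}$, with the inductive step powered by a microlocal analytic inversion based on Sj\"ostrand's complex stationary phase method, in the spirit of \cite{SU-AJM,FSU,Venky09,SV}. Set
\[
\sigma_0 = \sup\{\sigma\in[0,1] : f|_{F^{-1}[0,\sigma)} = 0\}.
\]
By (i) and the compactness of $\supp f$, $\sigma_0>0$; if $\sigma_0=1$ we are done, so assume $\sigma_0<1$ for contradiction. By (ii) and continuity there is $p\in \supp f\cap F^{-1}(\sigma_0)$, and $\xi:=\d F(p)\neq 0$ is a conormal to the level set. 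The goal is to prove $(p,\xi)\notin\WFA(f)$: since $f$ vanishes on the open side $\{F<\sigma_0\}$ by the inductive hypothesis, the Sato--Kawai--Kashiwara watermelon theorem then forces $p\notin\supp f$, contradicting the construction of $p$ and closing the induction to $\sigma_0=1$.

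\medskip

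To establish the analytic wavefront statement I exploit the geometry of null geodesics tangent to $F^{-1}(\sigma_0)$ near $p$. Strict lightlike-convexity gives a nontrivial cone of tangent null directions at each point of $F^{-1}(\sigma_0)$ near $p$, and ensures that the corresponding null geodesics meet the level set only at the tangency and stay on one side locally. Fubini along a 2-parameter family of such tangent geodesics sweeping $p$ gives, heuristically, a weighted Radon-type transform $R f$ on a family of 2-dimensional analytic timelike ``sheets'' through $p$, each sheet foliated by lightlike geodesics; these sheets need not be constructed explicitly and instead enter as level sets of an analytic phase $\phi$ in the complex stationary phase apparatus below. The hypothesis that $L_\kappa f$ vanishes near all tangent null geodesics translates to $Rf$ vanishing on an open family of such sheets near the one tangent to $F^{-1}(\sigma_0)$ at $p$.

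\medskip

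Concretely, following \cite{Sj-A,SU-AJM}, I build a complex analytic phase $\phi(x,y,\eta)$ and an analytic symbol $a(x,y,\eta,\lambda)$, adapted to the geometry above, and form an operator schematically of the form
\[
T_\lambda f(x) = \int e^{\i\lambda\phi(x,y,\eta)}\, a(x,y,\eta,\lambda)\,(L_\kappa f)(\gamma(y,\eta))\,\d y\,\d\eta,
\]
whose kernel, modulo exponentially small errors in $\lambda$, reproduces $f$ near $p$ in the $\xi$-direction when paired with an analytic cutoff $\chi$. The critical set of $\phi$ in $(y,\eta)$ for fixed $x$ near $p$ and direction $\xi$ will select exactly those null geodesics tangent to the level set $\{F=F(x)\}$ at $x$; transversal non-degeneracy of this critical point is precisely strict lightlike-convexity (iii), which is what makes the analytic stationary phase lemma of Sj\"ostrand applicable. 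The vanishing of $L_\kappa f$ on the integration domain then converts, via this stationary phase estimate, into an exponential bound $|\widehat{\chi f}(\lambda\xi')|\le Ce^{-\lambda/C}$ uniform for $\xi'$ in a conic neighborhood of $\xi$, which is the definition of $(p,\xi)\notin \WFA(f)$.

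\medskip

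The main obstacle is controlling the cutoffs in the analytic category so that one obtains genuine exponential smallness rather than just $O(\lambda^{-\infty})$ errors. Each analytic cutoff introduces its own analytic singularities, and these must either lie in $\{F<\sigma_0\}$, where the induction has already killed $f$, or at a definite distance from the critical set of $\phi$, where non-stationarity of the complex phase yields exponential decay. The geometric ingredient that separates these regimes is exactly the strict lightlike-convexity, which keeps the tangent null geodesics on one side of $F^{-1}(\sigma_0)$ and thus pushes the ``bad'' cutoff singularities into the inductively trivial region. A secondary check is that the condition $n\ge 2$ guarantees at least a one-parameter family of tangent null directions at each point of $F^{-1}(\sigma_0)$, which is the minimal amount needed to single out the conormal $\xi$ microlocally and is what allows $n=2$ here, in contrast to \cite{UV:local} where full ellipticity forces $n\ge 3$.
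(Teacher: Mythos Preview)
Your approach is essentially identical to the paper's: layer-stripping along the foliation, reduce to showing $(p,\d F(p))\notin\WFA(f)$ at a boundary point of $\supp f$, then Sato--Kawai--Kashiwara; the paper isolates the local microlocal step as a standalone unique continuation result (Theorem~\ref{thm_LC}), so that the proof of Theorem~\ref{thm_L} is literally the three-line continuity argument of your first paragraph. One small sharpening: in the paper's construction the non-degeneracy that drives Sj\"ostrand's stationary phase is $\det\phi_{z\zeta}\neq0$ (Lemma~\ref{lemma_phase}), a consequence of the local diffeomorphism $(s,x)\mapsto\gamma_{x,\theta}(s)$ rather than of convexity, whereas strict lightlike-convexity is what forces nearby tangent null geodesics to stay in a single chart and on the exterior side of $S$---exactly the role you correctly assign it later when discussing cutoff control.
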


We refer to Definition~\ref{def_convex} for the notion of lightlike convexity. 
Examples of strictly lightlike-convex surfaces in the Minkowski spacetime, which cannot be spacelike at any point, include the cylinder $|x|=R$, $R>0$; more generally, the smooth part of the double cone $|x|=c|t|$, with $0<c<1$ fixed; or the hyperboloid $|x|^2=c^2t^2+C$ with $C>0$ and such a $c$. They are all timelike.  We also note that we can actually require $L_\kappa f=0$ on a suitable  submanifold of lightlike geodesics of dimension $1+n$ only, as it follows form the proof. Moreover, it is only enough to assume that $L_\kappa f$ is analytic there, even microlocally so, see Remark~\ref{rem_n}. 

To demonstrate a typical application of Theorem~\ref{thm_L}, we will point out how one can show that if in the Minkowski space time $f$ satisfies \r{supp}, and $Lf=0$, then $f=0$, which, of course
follows from Theorem~\ref{thm_linesM} as well. We choose $F(\sigma) = \sigma+\tilde c^2(t-t_0)^2-|x-x_0|^2$ with $0<c<\tilde c<1$, $\sigma>0$. The constant $\sigma$ can be rescaled to be fit in $[0,1]$. 
Choosing various $x_0$ and $t_0$ we can prove $f=0$. One can perturb the metric a little bit assuming that $\supp f$ is supported in a fixed compact set, to get examples for non-flat metrics. 

\section{Analysis in the Minkowski case} 
\subsection{Fourier Transform analysis} Let $M=\R^n$ and $g$ be Minkowski.  By the Fourier Slice Theorem, knowing the X-ray transform for some direction $\omega$  recovers uniquely $\hat f$ on $\omega^\perp$.   
More precisely, the Fourier Slice Theorem in our case can be written as
\[
\hat f|_{ \tau+\xi\cdot\theta=0 } = 
\hat f(-\theta\cdot\xi,\xi) = \int_{\R^n} e^{-\i x\cdot\xi}Lf(x,\theta) \, \d x, \quad\forall\theta\in S^{n-1}. 
\]
Here and below, we denote by $\zeta = (\tau,\xi)$ the dual variables to $z=(t,x)$. The proof is easy, see \r{5}. The union of all $( 1,\theta)^\perp$ for all unit $\theta$ is $\{|\tau|\le |\xi|\} = \Sigma_s \cup\Sigma_t $, that is easy to see. This correlates well with the theorems below. In particular, we see that knowing $\hat f(\zeta)$ for a distribution $f$ with a well defined Fourier transform, recovers $\hat f$ in  the spacelike cone uniquely and in a stable way. Under the assumption that $\supp f$ is contained in the cylinder $|x|\le R$ for some $R$ (and temperate w.r.t. $t$), one can use the analyticity of the partial Fourier transform of $f$ w.r.t.\ $x$ to extend $\hat f$ analytically to the timelike cone, as well. 
This is how it has been shown in  \cite{MR1004174} that $L$ is injective on such $f$. 

\subsection{The normal operator $X'X$}
We formulate here a theorem about the Schwartz kernel of the normal operator $N=L'L$. We will skip the proof because we will not use the theorem for our main results. One way  to obtain it is to think of $L$ as a weighted version of the X-ray transform $X$ with a distributional weight $\delta(\tau^2-|\xi|^2)$ and use the results about the weighted X-ray transform, see, e.g., \cite{SU-Duke} and allow a singular weight there. Details will appear in \cite{SU-book}, see also \cite{LOSU-strings}.

\begin{theorem}\label{thm_M}\ 

(a) 
\[
L'Lf = \mathcal{N}*f, \quad \mathcal{N}(t,x) = \frac{\delta(t-|x|) +\delta(t+|x|) }{|x|^{n-1}}.
\]

(b) 
\[
L'Lf =  C_n\mathcal{F}^{-1}\frac{(|\xi|^2-\tau^2)_+^\frac{n-3}2} {|\xi|^{n-2}} \mathcal{F}f, \quad \forall f\in \mathcal{S}(\R^{1+n}), \quad C_n:= 2\pi|S^{n-2}| . 
\]

(c) 
\[
h(\Box_+) f = C_n^{-1} |D_x|^{n-2}\Box_+^{\frac{3-n}{2}}X'Xf,
\]
where $h$ is the Heaviside function, and $\Box = \partial_t^2-\Delta_z$ and $\mathcal{F}$ is the Fourier transform. 
\end{theorem}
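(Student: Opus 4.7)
For part (a), the plan is a direct kernel computation. With respect to the measure $dx\,d\theta$ on $\R^n\times S^{n-1}$ one checks from the defining pairing that $L'g(t,y) = \int_{S^{n-1}} g(y-t\theta,\theta)\,d\theta$, so
\[
L'Lf(t,y) = \int_{S^{n-1}}\int_{\R} f(t+s,y+s\theta)\,ds\,d\theta.
\]
I would then split the $s$-integral at $0$ and introduce $u = s\theta \in \R^n$ on each half; the Jacobian of $(s,\theta)\mapsto u$ is $ds\,d\theta = du/|u|^{n-1}$, and the arguments of $f$ become $(t\pm|u|,y+u)$. Collecting the two halves gives the convolution with $\mathcal{N}(t,x) = (\delta(t-|x|) + \delta(t+|x|))/|x|^{n-1}$, to be read distributionally as the surface measure on the double light cone weighted by $|x|^{-(n-1)}$.

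For part (b) I would use the Fourier slice identity $\widehat{Lf}(\xi,\theta) = \hat f(-\theta\cdot\xi,\xi)$ already recorded in the excerpt. A parallel calculation for $L'$ yields
\[
\widehat{L'Lf}(\tau,\xi) = 2\pi\,\hat f(\tau,\xi)\int_{S^{n-1}}\delta(\tau+\theta\cdot\xi)\,d\theta.
\]
The main step is to evaluate the last integral by the coarea formula: after rotating so that $\xi = |\xi|e_n$, the slice $\{\theta\in S^{n-1}:\theta_n = -\tau/|\xi|\}$ is an $(n-2)$-sphere of radius $\sqrt{1-\tau^2/|\xi|^2}$ (empty for $|\tau|\ge|\xi|$) of $(n-2)$-volume $(1-\tau^2/|\xi|^2)^{(n-2)/2}|S^{n-2}|$, and dividing by the tangential gradient length $|\xi|\sqrt{1-\tau^2/|\xi|^2}$ produces $|S^{n-2}|(|\xi|^2-\tau^2)_+^{(n-3)/2}/|\xi|^{n-2}$. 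Multiplying by $2\pi$ yields the announced symbol and the constant $C_n = 2\pi|S^{n-2}|$. An alternative route is to Fourier-transform the kernel $\mathcal{N}$ of (a) directly via Bessel functions as a cross-check.

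Part (c) then reduces to a symbol match. Multiplying the symbol of $L'L$ from (b) by $|\xi|^{n-2}/C_n$ (the symbol of $C_n^{-1}|D_x|^{n-2}$) and by $(|\xi|^2-\tau^2)_+^{(3-n)/2}$ (the symbol of $\Box_+^{(3-n)/2}$, interpreted via the functional calculus of $\Box$ on its nonnegative spectral subspace) collapses the product to $\chi_{|\xi|^2>\tau^2}$, which is precisely the symbol of $h(\Box_+)$, establishing the identity. The main obstacle throughout is making rigorous distributional sense of the homogeneous factor $(|\xi|^2-\tau^2)_+^{(n-3)/2}$ when $(n-3)/2<0$ — most notably at $n=2$, where the exponent is $-1/2$ — and justifying the multiplication of the two singular symbols across the cone $|\tau|=|\xi|$; this is handled by pairing against Schwartz test functions and invoking the standard calculus of Riesz-type homogeneous distributions supported on the closed spacelike cone, consistent with the ``singular weight $\delta(\tau^2-|\xi|^2)$'' viewpoint mentioned in the excerpt.
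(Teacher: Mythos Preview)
The paper does not actually prove this theorem: immediately before its statement it says ``We will skip the proof because we will not use the theorem for our main results,'' and only sketches a route---view $L$ as the ordinary X-ray transform with the singular weight $\delta(\tau^2-|\xi|^2)$ and import known formulas for weighted X-ray transforms---with details deferred to \cite{SU-book,LOSU-strings}.

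Your proposal is therefore not competing with a proof in the paper but supplying one, and what you outline is correct and self-contained. The polar-coordinate change $(s,\theta)\mapsto u=s\theta$ in (a) gives exactly the double-cone kernel; your Fourier-side computation in (b) is the cleanest way to get the symbol (the coarea evaluation of $\int_{S^{n-1}}\delta(\tau+\theta\cdot\xi)\,d\theta$ agrees with the direct slicing $d\theta=(1-\theta_n^2)^{(n-3)/2}\,d\theta_n\,d\omega$, giving $|S^{n-2}|(|\xi|^2-\tau^2)_+^{(n-3)/2}/|\xi|^{n-2}$); and (c) is then a one-line symbol identity. Compared with the paper's suggested approach, yours avoids invoking an external calculus with a distributional weight and is more elementary; the paper's route, on the other hand, places the result inside a general framework and would extend more readily to variable-coefficient or weighted variants. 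Your caveat about the distributional meaning of $(|\xi|^2-\tau^2)_+^{(n-3)/2}$ for $n=2$ and of the product in (c) across the light cone is well placed; the paper does not address this either, and it is exactly the kind of care that the deferred details would have to supply.
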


Above, we used the notation $s_+=\max(s,0)$ with the convention  that $s_+^0$ is the Heaviside function. 

In particular, when $n=3$, we get $\sigma(X'X) =   C_3|\xi|^{-1}h\left( |\xi|^2- |\xi_0|^2\right)$. Then
\[
h(\Box_+)f = C_3^{-1} |D_z|X'Xf.
\]
As we can expect, there is a conormal singularity of the symbol even away from $\xi=0$ living on the characteristic cone, and $X'X$ is elliptic outside it, and only there. The theorem shows that ``singularities traveling slower than light'' can be recovered.  The ones traveling faster cannot.

 \subsection{Recovery of spacelike  $C^\infty$ singularities in the Minkowski case} \label{sec_2.3}

Our first theorem says that knowing $Lf$ near a lightlike geodesic $\ell_0$ allows us to recover all spacelike singularities conormal to $\ell_0$.  We denote the conormal bundle of $\ell_0$ by $N^*\ell_0$.  Recall that the conormal singularities to $\ell_0$ contain lightlike ones, as well. This result follows from the analysis in  \cite{Greenleaf-Uhlmann, Greenleaf_Uhlmann90, Greenleaf_UhlmannCM} and the reason we present it here is to illustrate the main idea on a simpler problem where we can do explicit computations.  
  
\begin{theorem}\label{thm_C}
Let $f$ be a distribution so that $\ell(s)\not\in\supp f$  for $|s|>1/C$  with some $C$ for all  lightlike lines  $\ell$ near $\ell_0$. Let $L_\kappa f(\ell)\in C^\infty$ for $\ell$ in some neighborhood $\Gamma$ of $\ell_0$. Then $\WF(f)\cap N^*\ell_0$ contains no spacelike covectors.  
\end{theorem}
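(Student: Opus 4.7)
The plan is, given a spacelike covector $(z_0; \zeta_0) \in N^*\ell_0$, to construct an open family of timelike hyperplanes through $z_0$ each foliated by nearby parallel lightlike lines, and then reduce the claim to microlocal inversion of a classical (weighted) Radon transform at the conormal $(z_0; \zeta_0)$.

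Write $\zeta_0 = (\tau_0, \xi_0)$ and $\ell_0 = \ell_{x_0, \theta_0}$. The assumption $(z_0; \zeta_0) \in N^*\ell_0$ forces $\tau_0 + \xi_0 \cdot \theta_0 = 0$, so the Euclidean hyperplane $H_0 := \{z \in \R^{1+n} : (z - z_0) \cdot \zeta_0 = 0\}$ contains $\ell_0$, and the strict spacelike condition $|\tau_0| < |\xi_0|$ makes $H_0$ timelike. For $\zeta = (\tau, \xi)$ in a small neighborhood of $\zeta_0$ and $c$ close to $0$, set $H(\zeta, c) = \{z : (z - z_0) \cdot \zeta = c\}$. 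The strict inequality $|\tau| < |\xi|$ persists, and the equation $\tau + \xi \cdot \theta = 0$ combined with $|\theta| = 1$ cuts out, locally near $\theta_0$, a smooth selection $\theta(\zeta) \in S^{n-1}$ with $\theta(\zeta_0) = \theta_0$. Each $H(\zeta, c)$ is then foliated by the parallel lightlike lines $\ell_{x, \theta(\zeta)}$, with $x$ ranging over the slice $S_{\zeta, c} := H(\zeta, c) \cap \{t = 0\}$; all of these lines lie in $\Gamma$ once $(\zeta, c)$ is close enough to $(\zeta_0, 0)$.

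The support restriction on $f$ along lines near $\ell_0$ legitimizes the Fubini identity
\[
\int_{S_{\zeta,c}} L_\kappa f(x, \theta(\zeta)) \, d\sigma(x) = \int_{H(\zeta,c)} \tilde\kappa(z, \zeta) f(z) \, dS(z) =: R_\kappa f(\zeta, c),
\]
where $\tilde\kappa$ is a smooth, non-vanishing weight built from $\kappa$ and the Jacobian of the foliation, which stays bounded away from zero because $(1, \theta(\zeta))$ is transverse to $\{t = 0\}$. The hypothesis $L_\kappa f \in C^\infty$ on $\Gamma$ therefore forces $R_\kappa f \in C^\infty$ on an open neighborhood of $(\zeta_0, 0)$. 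The weighted hyperplane Radon transform in $\R^{1+n}$, with smooth non-vanishing weight, has an elliptic \PDO\ normal operator at every conormal direction (the Guillemin framework \cite{Guillemin85, GuilleminS}); applying this to the open family $\{H(\zeta, c)\}$, which sweeps an open neighborhood of $z_0$ and whose conormals fill an open spacelike cone around $\zeta_0$ in $T^*_{z_0}\R^{1+n}$, yields $(z_0; \zeta_0) \notin \WF(f)$.

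The main technical point, mild in this flat setting but foreshadowing the Lorentzian case of Theorem~\ref{thm_L}, is arranging the smooth selection $\theta(\zeta)$ together with a non-vanishing Jacobian so that $R_\kappa$ fits the standard microlocal framework; once that is in place, Guillemin-style inversion finishes the proof. The argument degenerates as $\zeta_0$ approaches the light cone in the cotangent fiber: the hyperplanes $H(\zeta, c)$ become tangent to the characteristic cone and admit no transverse foliation by lightlike lines, consistent with the invisibility of lightlike singularities to $L_\kappa$.
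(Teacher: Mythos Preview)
Your approach---foliate timelike hyperplanes by lightlike lines and reduce to a weighted Radon transform---is exactly the route the paper takes. But there is a genuine gap in the execution.

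You claim that ``all of these lines lie in $\Gamma$ once $(\zeta,c)$ is close enough to $(\zeta_0,0)$.'' This is false: the slice $S_{\zeta,c}=H(\zeta,c)\cap\{t=0\}$ is an unbounded $(n-1)$-plane, so as $x$ ranges over all of $S_{\zeta,c}$ the lines $\ell_{x,\theta(\zeta)}$ are not confined to any neighborhood of $\ell_0$, no matter how small $|\zeta-\zeta_0|+|c|$ is. Consequently the Fubini identity you write is not justified by the hypotheses: for $x$ far from $x_0$ you have no information that $L_\kappa f(x,\theta(\zeta))$ is smooth, nor even that the integral defining it converges (the support condition on $f$ is only stated for lines near $\ell_0$). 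The integral $\int_{H(\zeta,c)}\tilde\kappa f\,dS$ over an unbounded hyperplane has the same problem.

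The paper handles this by inserting a smooth cutoff $\chi(x,\theta)$ supported where $L_\kappa f$ is known to be smooth and equal to $1$ near $(x_0,\theta_0)$, and integrating $\chi L_\kappa f$ instead. This produces a weighted Radon transform whose weight is $\chi\kappa$ (times a Jacobian); the weight now vanishes outside a compact set, so it is \emph{not} globally non-vanishing as you assert, but it is elliptic at $(z_0,\zeta_0)$, which is all the normal-operator argument needs. Once you insert this cutoff your proof goes through; the paper's explicit parameterization $\theta(q)=(\cos q)e_n+(\sin q)e_{n-1}$ versus your implicit selection $\theta(\zeta)$ is a cosmetic difference.
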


\begin{proof}  
We construct planes foliated by lightlike geodesics with a fixed direction $(1,\theta)\in \R \times S^{n-1}$. Any such plane intersects the $t=0$ plane in a $(n-1)$-dimensional plane in the $x$ space. Let the latter be $\pi_{p,\omega}= \{x\cdot\omega=p\}$, $\omega\in S^{n-1}$, $p\in \R$.  Then the plane that we denote by  $ \pi_{p,\omega,\theta}$ is the flow out of the null geodesics with  celestial direction $\theta$ originating from $\pi_{p,\omega}$, i.e.,
\be{pi1}
\pi_{p,\omega,\theta} = \{(t,x+t\theta);\; x\in \pi_{p,\omega}\}.
\ee
The same plane can be also described by the equation $(x-t\theta)\cdot\omega=p$, therefore,
\be{pi2}
\pi_{p,\omega,\theta} = \{(t,x);\; (t,x)\cdot (-\theta\cdot\omega,\omega)=p\}.
\ee
The dot product here is in the Euclidean sense, and can be also thought of as a pairing of a vector and a covector, in invariant terms. In particular, we see that the set of such planes coincides with the timelike ones and the lightlike ones as a borderline case. 

Let $\zeta^0\not=0$ be spacelike and conormal to $\ell_0$ at a point that we can always assume that to be the origin. Applying a Lorentz transformation, we can always assume that $\zeta^0 =  e^{n-1}: = (0,\dots,0,1,0)\in \R^{1+n}$ and $\ell_0= \ell_{0,e_{n}} = (s,0,\dots0,s)$.  Here and below, we use the notations $e_k$ and $e^k$ to denote vectors/covectors with all entries zero instead of the $k$-th one. 
Take the plane $\pi_0=\{(t,x);\; x^{n-1}=0\}$, conormal to $\zeta^0$. This is the plane constructed above with $\omega=\zeta^0= e^{n-1}$ and $\theta = e_{n}$ (we could have chosen any other $\theta\perp\omega$ but we chose this one because it is related to $\ell_0)$.

It is more convenient to extend the parameters $(p,\omega)$ by homogeneity. We allow $\omega$ to be non-unit and denote it by $\xi$. Then the planes $\pi_{p,\omega,\theta}$ are given by 
\be{zeta()}
z\cdot\zeta=p, \quad \zeta= (-\theta\cdot\xi,\xi). 
\ee
We will  choose a suitable analytic  family of  $(\xi,\theta)$ near $\xi=e^{n-1}$, $\theta=e_{n}$ 
  parameterized by an $n+1$-dimensional parameter   so that the map from that parameter to the      (co-)normal $\zeta$  is a local diffeomorphism. 
We  keep $\xi$ unrestrained  and let $\theta$ depend on an 1D  parameter, $q$. Then 
\be{3''}
\partial \zeta/\partial \xi_k   = (- \theta^k ,e^k ),\quad  k=1,\dots,n, \quad \text{and}\quad \partial \zeta/\partial q|_{q=0} = (- \partial \theta/\partial q\cdot\omega|_{q=0},0). 
\ee
This system of vectors is linearly independent, if and only if $\partial \theta/\partial q\cdot\omega|_{q=0} \not=0$.  
Therefore, the variation $\partial \theta/\partial q$ should be chosen not parallel to $\pi_{p,\xi}$. This leaves essentially a variation in the direction of $\xi$. 
 Based on that, we set 
 \be{theta}
 \theta(q) = (\cos q) e_{n}+(\sin q) e_{n-1}. 
 \ee
 Then  
 \be{zeta}
 \zeta(q,\xi) = (-\theta(q)\cdot\xi,\xi)
 \ee
and
 \be{4}
 \pi_{p,\xi,\theta(q)} = \{(t,x);\; (t,x)\cdot \zeta(q,\zeta)=p\}, \quad p\in\R;\; \xi\in\R^{n}\setminus 0. 
 \ee
The fact that $(q,\xi)\to\zeta$ is a local diffeomorphism is also easy to verify directly. Solving \r{zeta} for $(q,\xi)$ yields $\xi_i=\zeta_i$, $i-1,\dots,n$, and 
\be{4e}
\zeta_n\cos q+ \zeta_{n-1}  \sin q= -\zeta_0
\ee
and the latter is uniquely solvable for $q$ near $q=0$ for $\zeta$ near $e^{n-1}$; let $q=q(\zeta)$ be the solution. 
 
 We can write the defining equation also as $(t,x)\cdot\nu=\tilde p :=p/|\zeta|$ with   $\nu=\zeta(q,\xi)/|\zeta(q,\xi)|$. Then it is easy to show that with $\xi$ restricted back to unit sphere, the map $\R\times S^{n-1} \ni (q,\xi)\to  \nu\in   S^{n}$ is a local analytic  diffeomorphism near $p=0$. In other words, $(q,\xi)$ with $\xi$ unit, parameterizes the normal $\nu$ to \r{4} in a locally diffeomorphic way.

By the support assumption of the theorem, there exists a neighborhood $U$ of $(0,e_{n})$ and $A>0$, so that all lightlike geodesic issued from $U$ leave $\supp f$ for $|s|\ge A$. Take a smooth function $\chi(x,\theta)$ supported in $U$ equal to $1$ near $(0,e_{n})$.  
	Since $L_\kappa f\in C^\infty$, we have $\chi L_\kappa f|_{x\in  \pi_{p,\xi,\theta(q)}, \theta=\theta(q)}\in C^\infty$, as well. Integrate $[\chi L_\kappa f](x,\theta(q))$ with respect to $x$ on the plane  $ \pi_{p,\xi}$  to get by Fubini's theorem:
\be{4r}
Rf(\pi_{p,\xi,\theta(q)} ) := \int_{ \pi_{p,\xi,\theta(q)}} \chi   \kappa f  \,\d\mu_{ {p,\xi,\theta(q)}}\in C^\infty
\ee
for some measure   analytically depending on $(p,q,\xi)$, i.e. an analytic  and positive multiple of the Euclidean measure on each plane. Above,   the integral is taken over a compact set; moreover, we can cut $f$ to a compactly supported distribution away from where we integrate without affecting the integral. Therefore, $f$ is in the microlocal kernel of the weighted Radon transform $R$ with a weight not vanishing at $(t,x)=0$ on the plane $\pi_0$. This allows us to apply $R'$ to get an elliptic \PDO\ of order $-2$, see, e.g., \cite{BQ}. Therefore, $f$ is microlocally smooth at $(0,\zeta^0)$ as claimed.  
\end{proof} 

\begin{remark}\label{rem_n}
Note that we only needed to know that $L_\kappa f(x,\theta)$ vanishes (being microlocally smooth in a certain cone would suffice)  for $\theta=\theta(q)$ only with $|q|\ll1$; i.e., we require knowledge of a restricted version of the already restricted $L$. This is similar to the known fact that in the Euclidean space we can invert the X-ray transform by ``slicing'' $\R^n$ into 2D planes. We could have proven our results in the Minkowski spacetime in $1+2$ dimensions only and then extended it to any dimension $1+n\ge 1+2$. The same remarks applies to the analytic case below but we need to know that $L_\kappa f$ is microlocally analytic (instead of just smooth) in some conic set. Even in the Lorentzian case, we still need to know $L_\kappa$ restricted to a certain an $(1+n)$-dimensional submanifold of geodesics. 
\end{remark}

\subsection{Recovery of analytic spacelike  singularities in  the Minkowski case}

We show first that we can recover all spacelike analytic singularities of $f$ conormal to  the lightlike lines along we integrate. For a definition of the analytic wave front set $\WFA(f)$, we refer to \cite{Sj-A} and \cite{Treves}. 

\begin{lemma}\label{lemma3.1}
Let $f\in\mathcal{D}'(\R^{1+n})$ and let $\ell_{x_0,\theta_0}$ be a fixed lightlike line so that  $\ell_{x,\theta}(s)\not\in\supp f$ for $|s|\ge 1/C$  with some $C$ for all  $(x,\theta)$ near $(x_0,\theta_0)$. Let $ \kappa(s,x+s\theta)$ be analytic and non-vanishing for  those   $s,x,\theta$. If $L_\kappa f(x,\theta)=0$     near $(x_0,\theta_0)$, then $N^*\ell_{x_0,\theta_0}\cap \WFA(f)$ contains no spacelike covectors. 
\end{lemma}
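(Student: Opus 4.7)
The plan is to repeat the geometric reduction from the proof of Theorem~\ref{thm_C} verbatim, and then, at the point where that proof invokes the classical $\PDO$ calculus to invert $R'R$, substitute an analytic microlocal argument of the Sj\"ostrand FBI type of the kind carried out by the author and Uhlmann in \cite{SU-AJM} (see also \cite{FSU,Venky09,SV}).

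First I would Lorentz-normalize so that $\ell_0 := \ell_{x_0,\theta_0}$ passes through the origin with tangent $e_n$, and assume a given spacelike covector $\zeta^0 \in N^*\ell_0$ equals $e^{n-1}$. Using the analytic family of timelike planes $\pi_{p,\xi,\theta(q)}$ in \r{4}, Fubini's theorem together with the hypothesis $L_\kappa f = 0$ gives
\[
Rf(p,q,\xi) := \int_{\pi_{p,\xi,\theta(q)}} \kappa\, f\, \d\mu_{p,\xi,\theta(q)} = 0
\]
for $(p,q,\xi)$ in an open neighborhood $V$ of $(0,0,e^{n-1})$, where the weight is analytic and non-vanishing on $\pi_0$. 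The support hypothesis permits replacing $f$ by a compactly supported $\chi f$ without affecting $Rf$ on those planes, and by \r{3''}--\r{4e} the map $(q,\xi)\mapsto \zeta(q,\xi)=(-\theta(q)\cdot\xi,\xi)$ is a local analytic diffeomorphism onto a conic neighborhood of $\zeta^0$ in the set of normals to this plane family. Thus the restricted lightlike transform has been replaced microlocally by a translation-invariant weighted Radon transform over an analytic family of timelike planes whose normals sweep out a conic neighborhood of~$\zeta^0$.

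To conclude $\zeta^0 \notin \WFA(f)$, I would test $\chi f$ against a complex phase (FBI-type) integral
\[
T_\lambda f(0,\zeta) = \int e^{\i\lambda[z\cdot\zeta + \i|z|^2/2]}\, a_N(z,\zeta;\lambda)\, f(z)\, \d z
\]
for $\zeta$ in a small complex conic neighborhood of $\zeta^0$, and show $|T_\lambda f| = O(e^{-c\lambda})$. The mechanism is the standard one: insert an analytic Radon inversion formula, suitably parametrized via the diffeomorphism $(q,\xi)\mapsto \zeta$, so as to write $a_N f$ essentially in the form $R^*(\psi_N Rf)$ modulo exponentially small errors. Since $Rf$ vanishes identically on $V$, only the contribution coming from the boundary of the support of $\psi_N$ remains; this contribution is controlled by Sj\"ostrand's analytic stationary phase theorem \cite{Sj-A}, whose hypotheses are met because the phase is non-degenerate in $(q,\xi)$ precisely by the Jacobian computation \r{3''}.

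The principal obstacle is the same as in \cite{SU-AJM,FSU}: one must construct the analytic cutoff $\psi_N$ so that it localizes into $V$ in the $(q,\xi)$ variables and simultaneously satisfies analytic symbol estimates $|\partial^\alpha \psi_N|\le C^{|\alpha|+1}\alpha!$ up to order $N\sim\lambda$, so that its introduction creates errors of size $O(e^{-c\lambda})$ rather than only $O(\lambda^{-\infty})$. This is where the analyticity of $g$ (through the analyticity of the plane family and its measure) and of $\kappa$ is genuinely used; it is also where the spacelike assumption on $\zeta^0$ is forced, since only spacelike conormals to $\ell_0$ lie in the range of $(q,\xi)\mapsto \zeta(q,\xi)$, and timelike directions are simply unreachable by any such foliation by timelike planes.
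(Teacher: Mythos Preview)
Your reduction to the planar Radon transform $R$ cannot be carried out ``verbatim'' from Theorem~\ref{thm_C}, and this is where your argument breaks. The hyperplane $\pi_{p,\xi,\theta(q)}$ is foliated by the lines $\ell_{x,\theta(q)}$ as $x$ ranges over the \emph{entire} $(n-1)$-plane $\pi_{p,\xi}$, not just over a neighborhood of $x_0$. Since you only know $L_\kappa f(x,\theta)=0$ for $(x,\theta)$ near $(x_0,\theta_0)$, and the support hypothesis likewise only controls $f$ along those nearby lines, the integral you write as $Rf(p,q,\xi)$ is not defined, and the claim $Rf=0$ on $V$ is unjustified. In Theorem~\ref{thm_C} this was handled by the smooth cutoff $\chi(x,\theta)$ in \r{4r}, but inserting any such cutoff into the weight destroys its analyticity---precisely the obstruction the paper flags at the start of its proof. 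Your proposed cutoff $\psi_N$ lives in the $(q,\xi)$ variables on the transform side and does nothing to resolve this: the problem is that $Rf$ itself is ill-defined before you ever get to localize in $(q,\xi)$.

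The paper's proof avoids forming $Rf$ altogether. It places an Ehrenpreis-type pseudo-analytic cutoff $\chi_N$ (satisfying $|\partial^\alpha\chi_N|\le (CN)^{|\alpha|}$ for $|\alpha|\le N$, not the analytic-symbol bound you wrote) in the \emph{$x$ variable}---the initial-point parameter of the light rays---and takes the Fourier transform of $\chi_N L_\kappa f$ in $x$. After the change of variables $(s,x)\mapsto z=(s,x+s\theta)$ this yields directly an identity of the form $\int e^{\i\lambda z\cdot\zeta}a_N(z,\zeta)f(z)\,\d z=0$ for $\zeta$ near $e^{n-1}$, with $a_N$ genuinely analytic near $(0,e^{n-1})$ (because $\chi_N\equiv 1$ there) and pseudo-analytic elsewhere. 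One then multiplies by a Gaussian factor in $\zeta$, integrates over a small $\zeta$-ball, splits the $z$-integral into $|z-w|<\delta/2$ and its complement, and integrates by parts $N\sim\lambda/(Ce)$ times on the complement; Sj\"ostrand's complex stationary phase on the near part produces the FBI-type phase $\eta\cdot(z-w)+\tfrac{\i}{2}|z-w|^2$ and the exponential decay. The essential difference from your outline is \emph{where} the localization is inserted: in the line-parameter $x$, so that the resulting amplitude is honest-analytic at the microlocal point under study, rather than along the plane or in the dual variables.
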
 

\begin{proof}
One would expect the proof to be a complete analog of that of Theorem~\ref{thm_C} but that proof involves smooth cutoffs along the planes we integrate over. We cannot do this in our case because that would destroy the analyticity of the weight. On the other hand, we need the localization because we  know that $L_\kappa f=0$ near $\ell_{x_0,\theta_0}$ only. 

We use the local coordinates  in the proof of Theorem~\ref{thm_C}, where $\zeta^0=e^{n-1}$, $\theta_0 = e_n$, $z_0=0$. 

Let $\chi_N\in C_0^\infty(\R^n)$ be with  support in $B(0,\eps)$, $\eps>0$,  with  $\chi_N=1$ near $x_0=0$ so that 
\be{A1}
|\partial_{x}^\alpha\chi_N|\le (CN)^{|\alpha|}, \quad \text{for $|\alpha|\le N$}, 
\ee
see \cite{Treves}. Then  for $0<\eps\ll1$, $\lambda>0$, and $\theta$ close to $\theta_0=e_n$,
\[
0= \int e^{\i \lambda x\cdot\xi} (\chi_N L_\kappa f)(x,\theta)\,\d x= 
\iint e^{\i\lambda x\cdot\xi} \chi_N (x)
%(x,\theta)
  \kappa(s,x+s\theta,\theta) f(s,x+s\theta)\,\d s\,    \,\d x. 
\]
If $(1,\theta)\cdot\zeta=0$ with $\zeta=(\tau,\xi)$, then $x\cdot\xi= (t,x+t\theta)\cdot\zeta$. Make the change of variables $x+s\theta\mapsto x$ in the integral above to get
\be{5}
\begin{split}
0 &= \int e^{\i \lambda x\cdot\xi} (\chi_N L_\kappa f)(x,\theta)\,\d x\\
& = 
\iint e^{\i\lambda (t,x)\cdot\zeta} \chi_N (x-t\theta)   
\kappa(t,x,\theta) f(t,x)\,\d t\,    \,\d x, \quad \text{if $(1,\theta)\cdot\zeta=0$}. 
\end{split}
\ee
For $\xi\in \R^n$, choose $\theta=\theta(q)$ as in \r{theta}, and set   
$\zeta=  (-\theta(q) \cdot\xi,\xi)$ as in   \r{zeta()}.  Then the orthogonality condition in \r{5} is satisfied. 
To connect this with the analysis in section~\ref{sec_2.3}, notice that we can get the same result by taking the Fourier transform  $\mathcal{F}_{p\to\lambda }$ in \r{4r}, with $\chi=\chi_N$. Choose now $q=q(\zeta)$ as in \r{4e}. The orthogonality condition still holds and we have 
\be{5.2}
\int e^{\i\lambda z\cdot\zeta }  a_N(z,\zeta) f(z)\,\d z =0 \quad \text{near $\zeta=e^{n-1}$},
\ee
where $ a_N= \chi_N(x-t\theta(q)%,\theta(q^n)  
)\kappa(t,x,\theta(q))  $, with $q=q(\zeta)$, is analytic and elliptic near $(z,\zeta) = (0,e^{n-1})$ (but not analytic away from some neighborhood of it) and satisfies pseudo-analytic estimates of the type \r{A1}. 

We will apply the complex stationary phase method of Sj\"ostrand \cite{Sj-A} similarly to the way it was applied in \cite{KenigSU} to the partial data Calder\'on problem and in \cite{FSU, SU-AJM} to integral geometry ones. 

Fix $0<\delta\ll\eps$, see \r{A1}.  With some $w\in\R^{1+n} $, $\eta\in \mathbf{R}^{1+n}$ close to $w=0$, $\eta=e^{n-1}$, multiply the l.h.s.\ of \r{5} by
\be{chi_delta}
e^{ \i \lambda(\i ( \zeta-\eta)^2/2 -  w\cdot\zeta)    } %, \quad z:=(t,x)
\ee
and integrate w.r.t.\ $\zeta$ in the ball $|\zeta-\eta|< \delta$ to get 
\be{6}
\int_{ |\zeta-\eta|< \delta}\int e^{\i\lambda\Phi(w,z,\zeta,\eta)}  a_N(z,\zeta)f(z)\,\d z\, \d \zeta=0,
\ee
where 
\[
\Phi = (z-w)\cdot\zeta +  \i (\zeta-\eta)^2/2.
\]
%For $|w-z|<\delta/C$, 
We split the $z$ integral \r{6} into two parts: over $\{z;\; |z-w|<\delta/2\}$ and then over the complement of that set. Since $|\Phi_\zeta|$ has a ($\delta$-dependent) positive lower bound for $|z-w|\ge\delta/2$, we can integrate in the outer  integral in \r{6}  by parts w.r.t.\ $\zeta$, see, e.g., \cite{FSU, SU-AJM} using \r{A1} and the fact that on the boundary $|\zeta-\eta|=\delta$, the factor $e^{\i\lambda \Phi}$ is exponentially small with $\lambda$. We then get
\be{7}
\Big|   \iint_{|z-w|<\delta/2, \,|\zeta-\eta|\le \delta }  e^{\i\lambda\Phi(w,z,\zeta,\eta)}  a(z,\zeta )f(z)\,\d z\, 
\d \zeta     \Big| \le C(CN/\lambda)^N + CNe^{-\lambda/C}
\ee
where $ a$ equals $a$ with the $\chi_N$ factor missing, i.e., $a= \kappa(t,x,\theta(q(\zeta)) $, which is  independent of $N$ because on the support of the integrand, that factor is equal to $1$ for $\delta\ll\eps$, see \r{5.2}. Choose now $N$ so that  $N\le \lambda/(Ce)\le N+1$ to get an exponential error on the right.

The phase $\Phi$, as a function of $\zeta$, has a unique  critical point   
$
\zeta_c =\eta +\i (z-w)
$
and $|\zeta_c-\eta| \le \delta/2$ on the support of the integrand in \r{7}. 
Set 
\be{psi}
\psi(w,z,\eta) = \Phi|_{\zeta=\zeta_c}.
\ee
Therefore, 
\[
\psi = \eta\cdot (z-w) + \i |z-w|^2 -\frac{\i}2|z-w|^2 = 
\eta\cdot (z-w)+\frac{\i}2 |z-w|^2 .
\]
This is the type of phase functions that are used to test for analytic microlocal regularity.  
We apply now \cite[Theorem~2.8]{Sj-A} and the remark after it to the $\zeta$-integral in \r{7} 
     to get
\be{8}
\Big|   \int_{|z-w|<\delta/2}  e^{\i\lambda\psi(w,z,\eta)} b(w,z,\eta ,\lambda)f(z)\,\d z\, 
    \Big| \le  Ce^{-\lambda/C}.
\ee
for $(z,\eta)$ close to $(0,e^{n-1})$, 
with some classical elliptic analytic  symbol $b$ of order $0$ in the sense of \cite{Sj-A} near $(w,z,\eta)= (0,0,e^{n-1})$. In particular, the principal part of $b(0,0,e^n,\lambda)$ is $\beta  \kappa(0,0,e_{n})$ with $\beta$ an elliptic factor depending on the phase, see \cite[Theorem~2.8]{Sj-A}. 
This implies  $(0,e^{n-1})\not\in \WFA(f)$, see \cite{Sj-A}. 
\end{proof} 

\subsection{Proofs of the support theorems in the Minkowski spacetime} 
The next proposition is a unique continuation result across a timelike surface in the Minkowski case which implies Theorem~\ref{thm_linesM}. 

\begin{proposition}\label{pr_M_supp}
Let $f\in\mathcal{D}'(\R^{1+n})$ and let $\ell_{x_0,\theta_0}$ be a fixed lightlike line in the Minkowski spacetime so that  $\ell_{x,\theta}(s)\not\in\supp f$ for $|s|\ge 1/C$  with some $C$ for all  $(x,\theta)$ near $(x_0,\theta_0)$. Let $ \kappa(s,x+s\theta)$ be analytic for  those   $s,x,\theta$. 
  
If $L_\kappa f(x,\theta)=0$ near $(x_0,\theta_0)$   and if $f=0$ on one side of $S$ near $z_0$,   then $f=0$ near $z_0$. 
\end{proposition}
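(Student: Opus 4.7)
The plan is to combine Lemma~\ref{lemma3.1} with the analytic Holmgren-type uniqueness principle of Sato--Kawai--Kashiwara (see, e.g., \cite{Sj-A}). First I would apply Lemma~\ref{lemma3.1} to obtain that $N^*\ell_{x_0,\theta_0}\cap\WFA(f)$ contains no spacelike covectors; in particular at the point $z_0\in\ell_{x_0,\theta_0}$ of interest, every spacelike covector conormal to the lightlike tangent $(1,\theta_0)$ is absent from $\WFA(f)|_{z_0}$, and the same holds for its antipode after rerunning the lemma with reflected direction parameters $(q,\xi)$.

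Next I would invoke the Sato--Kawai--Kashiwara theorem in the following form: since $f$ vanishes on one side of $S$ near $z_0$, if the conormal $\nu\in N^*_{z_0}S$ directed into the vanishing side does not lie in $\WFA(f)|_{z_0}$, then $f=0$ in a full neighborhood of $z_0$. For the two ingredients to combine, the geometric setup implicit in the proposition is that $S$ is a timelike hypersurface through $z_0$ tangent to $\ell_{x_0,\theta_0}$ at $z_0$ --- the natural configuration when $\ell_{x_0,\theta_0}$ first grazes $\supp f$ in the deformation argument used to derive Theorem~\ref{thm_linesM} from this proposition. Under that configuration $\nu$ is spacelike (because $S$ is timelike) and belongs to $N^*_{z_0}\ell_{x_0,\theta_0}$ (because the line is tangent to $S$), so Lemma~\ref{lemma3.1} forces $\nu\notin\WFA(f)|_{z_0}$ and Sato--Kawai--Kashiwara then yields $f=0$ near $z_0$.

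The main obstacle is of a bookkeeping nature: verifying the sign conventions so that the covector excluded by Lemma~\ref{lemma3.1} is the one Sato--Kawai--Kashiwara requires, and ensuring that the exclusion is uniform in a conic neighborhood of $\nu$ rather than just pointwise. Both are immediate from inspecting the FBI-type estimate \r{8} produced in the proof of Lemma~\ref{lemma3.1}: its decay is uniform in $\eta$ close to $\nu$, and the sign of $\nu$ is flipped by reflecting the $\xi$-parameter. If one prefers to avoid invoking Sato--Kawai--Kashiwara as a black box, the statement can be unpacked directly from \r{8} in the spirit of the complex stationary phase analysis of Sj\"ostrand \cite{Sj-A} already used in Lemma~\ref{lemma3.1}.
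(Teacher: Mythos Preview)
Your proposal is correct and matches the paper's own proof essentially line for line: assume $z_0\in\supp f$, invoke Sato--Kawai--Kashiwara to place the conormal $\nu$ of the timelike surface $S$ (hence a spacelike covector, conormal to $\ell_{x_0,\theta_0}$) into $\WFA(f)$ at $z_0$, and contradict Lemma~\ref{lemma3.1}. Your extra caution about sign and uniformity is harmless but unnecessary---the paper simply writes $(z_0,\mp\nu(z_0))\in\WFA(f)$ and notes the contradiction.
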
 
\begin{proof}
Assume that $z_0\in\supp f$. Then $(z_0,\mp \nu(z_0))\in \WFA(f)$ by the  Sato-Kawai-Kashiwara Theorem, see \cite{SKK} and \cite{Sj-A}, where $\nu(z_0)$ is one of the two unit co-normals to $S$ at $z_0$. That covector is spacelike by the assumption about $S$, and is conormal to $\dot\ell_{z_0,\theta_0}(0)$. This contradicts Lemma~\ref{lemma3.1}, which completes the proof of the proposition. 
\end{proof}

\begin{proof}[Proof of Theorem~\ref{thm_linesM}] 
Fix $(x_1,\theta_1)\in U$. Let $[0,1]\ni p\to (x_p,\theta_p)$ be a continuous family in $U$  connecting $(x_0,\theta_0)$ with $(x_1,\theta_1)$.  
We can always assume that $U$ is bounded, hence $\bar U$ is compact. 
Let $\mathcal{U}$ be the set of points lying on $\ell_{x,\theta}$, $(x,\theta)\in U$.  

Choose $\tilde c\in(c,1)$, where $c$ is the constant in \r{supp}. Denote by $\tilde \ell_{x,\theta}(s) =(s,x+s\tilde c\theta) $, $(x,\theta)\in\R^n\times S^{n-1}$, the timelike geodesics with speed $\tilde c$. By \r{supp}, there exists $A>0$ so that $ \ell_{x,\theta}(s)\not\in\supp f$ for all $(x,\theta)\in \bar U$ and $|s|\ge A$, 
   and so that the same holds for $\tilde \ell_{x,\theta}(s)$ uniformly w.r.t.\ $\tilde c$ as long as $c+\mu\le \tilde c\le1$ with some fixed $\mu\in(0,1-c)$.

Let $\eps>0$ be such that   the cylinder $C_{x_0,\theta_0}: = \cup_{|y-x_0|\le\eps} \ell_{y,\theta}$  is disjoint from $\supp f$ but sill lies in $\mathcal{U}$. By the arguments above, we can assume that $\supp f$ is compact since $t$ is bounded on it along the light  lines under consideration. 
Therefore, for the cylinder $\tilde C_{x,\theta}: = \cup_{|y-x|\le\eps/2} \tilde\ell_{y,\theta}$ we have 
\be{9}
\tilde C_{x,\theta}\cap \{|t|\le A\}\subset   C_{x,\theta}\cap \{|t|\le A\}
\ee
 for every $(x,\theta)$ as long as $\tilde c$ is close enough to $c$ but still smaller than it. We require one more property for $\tilde c$ which actually refines \r{9}: 
\be{10}
 \begin{split}
 &\text{$ \forall (x,\theta)\in\bar U  $, $\forall z\in \tilde C_{x,\theta}\cap \{|t|\le A\}$, and every unit $\theta_1$ with $|\theta_1-\theta|\le \sqrt{1-\tilde c^2}$,}\\
 &\text{the lightlike line through $z$ in the direction of $(1,\theta_1)$ stays in  $C_{x,\theta}\cap \{|t|\le A\}$.}
 \end{split}
\ee
This property can be guaranteed for $1-\tilde c\ll1$ by continuity and compactness. % and by the compactness of $K$. 
 We fix such a $\tilde c$. 

Assume that the family $\{\tilde C_{x_p,\theta_p};\; p\in[0,1]\}$ has a common point with  $\supp f$. Let $p_0$ be the least $p\in[0,1]$ (which exists by compactness and continuity arguments) for which $\tilde C_{x_p,\theta_p}\cap \supp f \not=\emptyset$. Then $f=0$ in the interior of $\tilde C_{x_p,\theta_p}$ and there is a point $z^\sharp$ on its boundary which is also in $\supp f$. Let $\zeta^\sharp$ be a non-vanishing conormal to that cylinder at $z^\sharp$. After normalization, we get $\zeta^\sharp = (-c\theta_p\cdot\omega,\omega)$ for some  $\omega\in S^{n-1}$. Clearly, $\zeta^\sharp$ is spacelike. Let $\tilde \ell_{x^\sharp,\theta_p}$ be the line on the cylinder $\tilde C_{x_p,\theta_p}$   through $z_0$; then $\zeta^\sharp$ is conormal to it at $z^\sharp$. 

To apply Proposition~\ref{pr_M_supp}, we claim that there is a lightlike line at $z^\sharp$ %$\ell_{z^\sharp,\theta_c}$ 
normal to $\zeta^\sharp$  so that this line is still in $\mathcal{U}$. Suppose for a moment that this done. Then by %Theorem~\ref{thm_C}, $(z^\sharp,\zeta^\sharp)\not\in \WFA(f)$. 
by Proposition~\ref{pr_M_supp}, we would get that $f$ vanishes near that point, which would be a contradiction. Therefore, such a $p_0$ would not exist, and in particular, $f=0$ near $\ell_{x_1,\theta_1}$. 

To prove the claim, we are looking for a unit $\theta^\sharp$ so that $\zeta^\sharp\cdot (1,\theta^\sharp)=0$. This is equivalent to solving $(\theta^\sharp -c\theta)\cdot\omega = 0$ for $\theta^\sharp$. It is easy to see that this is always possible to do and the solution closest to $\theta$ is at its farthest distance from $\theta$ when $\omega=\pm \theta$; then $|\theta^\sharp-c\theta|=\sqrt{1-c^2}$. This shows that $|\theta^\sharp-\theta|\le \sqrt{1-c^2}$ uniformly in $\omega$. Property \r{10} then proves the claim.
\end{proof}

%\newpage
\section{The Lorentzian case}

\subsection{Support theorems for analytic Lorentzian manifolds} Let $(M,g)$ be a Lorentzian manifold now.  
Next theorem is an analog of Theorem~\ref{thm_C}. Since the global geometry of the null-geodesics in the Lorentzian case is non-trivial and in particular, one can have conjugate points, the assumptions are stronger.

\begin{definition}\label{def_convex}
Let $S$ be a  smooth surface near a point $z\in S$ and let $F$ be a defining function so that $S=F^{-1}(0)$ near $z$, $\d F(z)\not=0$, and declare $\{F<0\}$,  to be the ``interior'' of $M$ near $z$. Similarly, $\{F>0\}$ is the ``exterior'' of $M$ near $z$.  We say that $S$ is \textit{strictly convex} at $z$ in the direction $v\in T_zS$, if  $\nabla^2F(z)(v,v)>0$. 

We call $S$  strictly lightlike-convex if it timelike,  it is strictly convex at all lightlike $(z,v)\in TS$, and  every maximal lightlike geodesic tangent to $S$ at some point has no other common points with $S$.
\end{definition}

   Here $\nabla^2F$ is the Hessian of $F$, with $\nabla$ being the covariant derivative. This notion of convexity is equivalent to $\frac{d^2} {\d s^2}F\circ\gamma(s)<0$ for the geodesic $\gamma$ through $x$ in the direction $v$; and it is independent of the choice of $F$.

\begin{theorem}\label{thm_LC} 
Let $(M,g)$ be an analytic Lorentzian manifold. 
%Let $f\in \mathcal{D}'(M)$ be properly supported and let  
Let $S$ be a timelike surface near a fixed point $z_0\in S$. Let $\gamma_0$ be a lightlike  geodesic through $z_0$  tangent to $S$ at $z_0$. Assume that $S$ is strictly convex at $z_0$ in the direction of $\dot\gamma_0$, and that $\kappa$ is analytic and non-vanishing near $(z_0,\dot\gamma_0|_{z_0})$. Let $f$ be a distribution, and let $\gamma_0$ be non-trapping in $\supp f$. 
 Let $L_\kappa f(\gamma)=0$ for all lightlike geodesics  $\gamma$ near $\gamma_0$. 
If $f=0$ in the exterior of $S$ near $z_0$, then $f=0$ near $z_0$. 
\end{theorem}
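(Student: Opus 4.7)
The plan is to deduce Theorem~\ref{thm_LC} from a Lorentzian analog of Lemma~\ref{lemma3.1} combined with the Sato-Kawai-Kashiwara theorem, imitating the structure of Proposition~\ref{pr_M_supp}. Suppose for contradiction that $z_0 \in \supp f$. Because $f$ vanishes on the exterior side $\{F > 0\}$ of $S$ near $z_0$, the Sato-Kawai-Kashiwara theorem yields $(z_0, \pm \nu) \in \WFA(f)$, where $\nu$ is a unit conormal to $S$ at $z_0$. Since $S$ is timelike, $\nu$ is spacelike; since $\dot\gamma_0(0) \in T_{z_0}S$, $\nu$ is conormal to $\dot\gamma_0$. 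Hence it suffices to prove the following Lorentzian analog of Lemma~\ref{lemma3.1}: if $L_\kappa f(\gamma) = 0$ for all $\gamma$ in a neighborhood of $\gamma_0$ and $\gamma_0$ is non-trapping in $\supp f$, then no spacelike covector in $N^*\gamma_0$ belongs to $\WFA(f)$.

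To prove this analog, I would construct an analytic $(n+1)$-parameter family of hypersurfaces $\Sigma_{p,\xi,q} \subset M$, each foliated by null geodesics close to $\gamma_0$, such that the map from parameters to conormal covectors at $z_0$ is a local analytic diffeomorphism from a neighborhood of a base parameter onto a neighborhood of $\nu$ inside the set of spacelike covectors conormal to $\dot\gamma_0$. Concretely, pick analytic coordinates near $z_0$, choose an analytic spacelike hypersurface $\Sigma_0$ through $z_0$ transverse to $\gamma_0$, and parameterize the null geodesics issuing from $\Sigma_0$ by a point $y \in \Sigma_0$ together with an analytic 1-parameter family of null directions $v(q)$ whose derivative at $q=0$ is not tangent to the nascent hypersurface --- the Lorentzian analog of \r{theta}--\r{4e}. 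Analytic dependence of null geodesics on initial data then provides a defining phase function $\phi(z;p,\xi,q)$ whose level sets are the $\Sigma_{p,\xi,q}$, and whose $z$-gradient at $z_0$ traces out the desired neighborhood of spacelike conormals to $\dot\gamma_0$ as the parameters vary.

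With this foliation available, Fubini converts $L_\kappa f = 0$ near $\gamma_0$ into the vanishing of a weighted Radon-type transform $R_\kappa f(p,\xi,q) = \int_{\Sigma_{p,\xi,q}} \kappa_0\, f\, \d\mu$ with analytic nonvanishing weight $\kappa_0$ and analytic positive density $\d\mu$, exactly as in \r{4r}. I would then repeat the proof of Lemma~\ref{lemma3.1} nearly verbatim: insert the pseudo-analytic cutoff $\chi_N$ satisfying \r{A1}, take a partial Fourier transform in $p$, multiply by Sj\"ostrand's FBI weight \r{chi_delta}, and apply \cite[Theorem~2.8]{Sj-A} to the stationary-phase integral in $(p,\xi,q)$. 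The critical-point analysis proceeds as in \r{psi}, producing the standard FBI phase up to analytic elliptic symbols, and yields the exponential bound $\le C e^{-\lambda/C}$ on the FBI transform of $f$ at $(z_0, \nu)$. Thus $(z_0, \nu) \notin \WFA(f)$, contradicting the reduction above.

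The main obstacle is the construction in the second paragraph: showing that the parameterization $(p,\xi,q)$ is a local analytic diffeomorphism onto a neighborhood of spacelike conormals to $\dot\gamma_0$, in the absence of the translation symmetries that make this explicit in Minkowski space. One must work on the null cone $\{g(v,v) = 0\} \subset TM$, solve the geodesic ODE analytically in the parameters via the implicit function theorem, and verify a non-degeneracy condition on the covector Jacobian playing the role of \r{3''}. The strict lightlike-convexity of $S$ at $z_0$ in the direction $\dot\gamma_0$ enters crucially here: it guarantees that $\gamma_0$ lies in $\{F > 0\}$ near $z_0$ (except at $z_0$ itself), so that $\gamma_0$ is trivially non-trapping in $\supp f$ locally, and that this non-trapping persists under small perturbation to the nearby null geodesics filling out the foliation, allowing the local Fubini reduction to be carried out without boundary contributions from $\supp f$.
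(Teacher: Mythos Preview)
Your proposal is correct and follows essentially the same route as the paper's proof: reduce via the Sato--Kawai--Kashiwara theorem to showing $(z_0,\nu)\notin\WFA(f)$, parameterize nearby null geodesics from an analytic spacelike slice through $z_0$ using an $(x,\theta(q))$-type chart, and run Sj\"ostrand's complex stationary phase with a pseudo-analytic cutoff $\chi_N$. The non-degeneracy condition you flag as ``the main obstacle'' is precisely the content of the paper's Lemma~\ref{lemma_phase}, which computes $\det\phi_{z\zeta}(0,e^{n-1})=-1$ from the Taylor expansion \r{10q} of the inverse map $(s,x)\mapsto z=\gamma_{x,\theta}(s)$; this is done in a few lines once semigeodesic coordinates are fixed, and does not require solving the geodesic ODE beyond first order.

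One small difference in packaging: you describe the argument as ``Fubini first to get $R_\kappa f=0$, then Fourier in $p$,'' which is the $C^\infty$ route of Theorem~\ref{thm_C}. The paper instead (as in Lemma~\ref{lemma3.1}) Fourier-transforms $\chi_N L_\kappa f(x,\theta(q))$ directly in $x$ and then changes variables $(s,x)\to z$, obtaining the phase $\phi(z,\zeta)=x^\sharp(z,\theta(\zeta_0))\cdot\zeta'$ without ever forming the surfaces explicitly; the surfaces are only the level sets of $\phi$. This avoids the issue of cutting off the surface integral before you know where to put $\chi_N$. Your version would arrive at the same integral after unwinding, but the paper's order of operations is cleaner. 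Also, the paper uses strict convexity not for the non-trapping argument you describe, but to guarantee that all perturbed null geodesics meet the interior of $S$ only on an $O(\sqrt\eps)$ arc, so the whole analysis fits in a single coordinate chart near $z_0$.
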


\begin{proof}
By the  Sato-Kawai-Kashiwara Theorem, see \cite{SKK} and \cite{Sj-A}, that we already used in the proof of Proposition~\ref{pr_M_supp}, it is enough to prove that $f$ is microlocally analytic in the direction of the conormal $\zeta_0$ to $S$ at $z_0$. 

We follow the construction in  Proposition~\ref{pr_M_supp}.  We can consider the former proof as a linearized version of the present one, when we replace $S$ with its tangent plane at $z_0$ normal to $\zeta_0$, and the geodesics through $z_0$ by tangent lines. 

By the strict convexity assumption, %the Euclidean length of any 
for any lightlike geodesic $\gamma$ which is an $O(\eps)$, $\eps\ll1$, perturbation of $\gamma_0$ (in a fixed parameterization), the intersection of $\gamma$ with the interior of $S$, in any local chart has Euclidean length  $O(\sqrt\eps)$. This allows us to work in a fixed coordinate system $(t,x)$ near $z_0$. We choose a spacelike surface $S_0$ through $z_0$.  We then choose 
  semigeodesic coordinates  $(t,x)$ near $z_0=0$ normal to $S_0$, i.e., the lines $(t,x) =(t,\text{const.})$, are future pointing (the future direction being determined by $\gamma_0$) timelike geodesics normal to $S_0$ and $g$ is given locally by $-\d t^2+ g_{\alpha\beta}\d x^\alpha \d x^\beta$, see \cite{Petrov_book}. Such coordinates are constructed by taking a normal field $v$ to $S$ normalized so that $g(v,v)=-1$ and using it as initial directions of the geodesics $x=\text{const}$. We can arrange that $z_0=0$ and $\dot\gamma_0(0)=(0,e_{n})$. 

In those coordinates, future pointing geodesics near $\gamma_0$, close to $S_0$, are parameterized by their initial points $x\in S$ and the projection $\theta$ of their tangents to $TS_0$, i.e., $\gamma_{x,\theta}(s)$ is defined as the geodesic issued from $(0,x)$ with $\dot\gamma_{x,\theta}(0)=(1,\theta)$.

For  $(x,\theta)$ close to $(0,e_{n})$, 
  we then write
\[
L_\kappa f(x,\theta) = \int \kappa(\gamma_{x,\theta}(s),\dot \gamma_{x,\theta}(s))  f(\gamma_{x,\theta}(s))\,\d s.
\]

We chose $\theta=\theta(q)$ as in \r{theta} with $|q|<\eps$, where $\eps$ is the number controlling the size of $\supp\chi_N$, see \r{A1}. 
Then  for $0<\eps\ll1$,  $\lambda>0$, 
\[
\begin{split}
0 &= \int e^{\i \lambda x\cdot\xi} (\chi_N L_\kappa f)(x,\theta(q))\,\d x\\
&= 
\iint e^{\i\lambda x\cdot\xi} \chi_N (x)  \kappa(\gamma_{x,\theta(q)}(s), \dot \gamma_{x,\theta(q)}(s)) f(\gamma_{x,\theta(q)}(s) )\,\d s\,    \,\d x. 
\end{split}
\]
For every fixed $q$ near $q=0$, which fixes $\theta=\theta(q)$,  the map $(s,x)\to z =\gamma_{x,\theta}(s) $ is a local diffeomorphism near $z_0=0$ by the Implicit Function Theorem.   
Let $s^\sharp(z,\theta)$, $x^\sharp(z,\theta)$ be the inverse map. Since   $\gamma_{x,\theta}(s) = (s,x+s\theta)+O(s^2)$, we get the Taylor expansion 
\be{10q}
s^\sharp(z,\theta(q)) = t+ O(t^2), \quad   x^\sharp (z,\theta(q))= x-t \theta(q) +O(t^2),
\ee
where $z:=(t,x)$. 
Those expansions can be justified by the Implicit Function Theorem.  
% For $q=0$, we have $s^\sharp=z^0$, $x^\sharp=z'-z^0 e_n$. 
Make the change of variables $(s,x)\to z$ above to get
%If $(1,\theta)\cdot\zeta=0$ with $\zeta=(\tau,\xi)$, then $x\cdot\xi= (t,x+t\theta)\cdot\zeta$. Make the change of variables $x+s\theta\mapsto x$ in the integral above to get
\be{11}
0= %\int e^{\i \lambda x\cdot\xi} (\chi_N L_\kappa f)(x,\theta(q))\,\d x= 
\int e^{\i\lambda\phi} \chi_N (x^\sharp (z,\theta(q))) \kappa J (q,z)f(z)\,\d z
\ee
with $\phi(z,\xi,q  ) = x^\sharp (z,\theta(q))\cdot\xi$. Here, $\kappa$ is the weight in the new variables, and $J$ is the related Jacobian. 
%At $q=0$, we have $\phi = (z'-z^0e_n)\cdot\xi$. 
If $g$ is Minkowski, we get $\phi = (z-t\theta(q))\cdot\xi$, which is the same function as in \r{5.2}. 

Set $\zeta=(q,\xi)$. Then $q=\zeta_0$, $\xi=\zeta' = (\zeta_1,\dots,\zeta_n)$ and 
\be{10phi}
\phi(z,\zeta  ) =   x^\sharp (z,\theta(\zeta_0))\cdot\zeta'. %,\quad \text{ in particular,}\; \phi(z,(0,\zeta')) = (z'-z^0e_n)\cdot\zeta'.
\ee

\begin{lemma}\label{lemma_phase} 
 $\det\phi_{z\zeta}(0,e^{n-1})= -1$. 
 
 %(b) For $|w|\ll1$,  $|\zeta-e^{n-1}|\ll1$ and $w$ close enough to $\gamma_0$,  the equality  $\phi_\zeta(z,\zeta) = \phi_\zeta(w,z)$ implies $z=w$. 
\end{lemma}

\begin{proof}
To compute $\phi_{x\zeta}(0,e^{n-1})$, write first (recall that $z_0=t$)
\be{15}
\phi_{\zeta_k}|_{\zeta= e^{n-1}, z_0=0} = x^\sharp(z,e_n)|_{z_0=0} = z^k, \quad k=1,\dots,n. 
\ee
Therefore, 
\[
\phi_{z^i\zeta_k}(0,e^{n-1})=\delta_i^k, \quad k=1,\dots,n, \; i=0,1,\dots,n. 
\]
Therefore, $\det\phi_{z\zeta}(0,e^{n-1})= \phi_{\zeta_0 z^0}(0,e^{n-1})$. One the other hand, the latter equals $-1$ as follows from \r{10q} and \r{theta}. 
\end{proof}

We now get from \r{11}:
\be{12}
0=\int e^{\i\lambda \phi(z,\zeta)}  a_N(z,\zeta) f(z)\, \d z=0\quad \text{near $\zeta=e^{n-1}$},
\ee
compare with \r{5.2}, with $ a_N$ elliptic and analytic near $(0,e^{n-1})$ but not for all $(z,\zeta)$. On the other hand, it satisfies a pseudo-analytic estimate of the type \r{A1}. 

%With the notations and assumptions in
Similarly to the proof of Lemma~\ref{lemma3.1}, for $w$ and $\eta$ as \r{chi_delta},  in multiply \r{12} by the factor
\[
e^{\i \lambda(\i (\zeta-\eta)^2/2 - \phi(w,\zeta) ) }
\]
and integrate w.r.t.\ $\zeta$ over the ball $|\zeta-\eta|<\delta  $ with $0<\delta\ll\eps$ to get \r{6} with 
\[
\Phi = \phi(z,\zeta)- \phi(w,\zeta)+   \i (\zeta-\eta)^2/2 .
\]
The rest of the proof follows closely those in \cite{FSU, SU-AJM}. By Lemma~\ref{lemma_phase},  $|\Phi_\xi|$ has a lower bound outside  any neighborhood of $z=w$ for $w$ localized as above and $z$ in the support of the integrand. This allows us to integrate by parts to get \r{7} in this case and choose $N\sim \lambda/(Ce)$ to make the r.h.s.\ of \r{7} exponentially small with $\lambda$. 
The phase function $\Phi$ has an analytic extension for $\zeta$ in  some complex neighborhood of $\zeta_0=e^{n-1}$. By Lemma~\ref{lemma_phase}, $\phi_\zeta(z,\zeta)=\phi_\zeta(w,\zeta)$ for such $\zeta$ and  $z$, $w$ close to $0$ implies $z=w$. Therefore, the critical point $\zeta_c=\eta$ of $\Phi$ w.r.t.\ $\zeta$ is real only when $z=w$ and at that point, $\Im\Phi_{\zeta\zeta}>0$; and it is unique and complex otherwise, still satisfying that inequality by a perturbation argument, when $0<\delta\ll\eps$. Then we get \r{8} with $\psi$ defined as in \r{psi}.  Then we conclude as in \cite{FSU, SU-AJM} that $(0,\xi_0)\not\in\WFA(f)$. 

This arguments so far work if $f$ is a continuous function, for example. If $f$ is a distribution, as stated, we need to take a smooth cutoff $\chi_\delta$ and consider the $z$-integrals above in distribution sense. 
\end{proof}

\begin{proof}[Proof of Theorem~\ref{thm_L}] 
%By the proper support assumption, every null geodesic $\gamma_0$ intersects $\supp f$ in a compact set; and for every neighborhood of the latter, the null geodesics close enough to $\gamma_0$ intersect $\supp f$ in that neighborhood. 
%By (i), we may assume that $\supp f$ is compact. 
Assume that the statement of the theorem is not true.  
Let $\sigma_0\in [0,1)$ be the infimum  of all $\sigma$ for which $F^{-1}(\sigma)\cap \supp f\not=\emptyset$. 
Then $f=0$ in the ``exterior'' $F^{-1}(0,\sigma_0)$ of $S_0= F^{-1}(\sigma_0)$, and  $\supp f$ has a common $z_0$ point with $S_0$. The latter follows from a compactness argument. In particular, $\sigma_0>0$. 
Since $S_0$ is timelike by assumption, there is a lightlike geodesic $\gamma_0$ through $z_0$ tangent to $S_0$ which does not hit $S_0$ again by the strict convexity assumption. Near $z_0$, the geodesic $\gamma_0$ lies in the exterior $F^{-1}[0,\sigma_0]$ by the local part of the strict convexity assumption; and this is also true globally by the global part of that assumption. 
By Theorem~\ref{thm_LC}, $f=0$ near every common point of $S_0$ and $\supp f$, which is a contradiction. 
\end{proof}
%\newpage

%\bibliographystyle{abbrv}
%\bibliography{../myreferences}
%%%
%\end{document}

\end{document}